\numberwithin{equation}{section}
\numberwithin{figure}{section}
\newtheorem{thm}{Theorem}[section]
\newtheorem*{defn*}{Definition}
\newcommand{\setof}[1]{\left\{ {#1}\right\}}
\newcommand{\Aut}{\mbox{\rm Stab}}
\newcommand{\C}{{\mathbb{C}}}
\newcommand{\cC}{{\mathcal C}}
\newcommand{\cF}{{\mathcal F}}
\newcommand{\cG}{{\mathcal G}}
\newcommand{\cI}{{\mathcal I}}
\def\setof#1{\left\{{#1}\right\}}
\begin{document}

\title{The devil is in \xcancel{details} asymmetries}

\author{{Edinah K. Gnang} \and{Vidit Nanda}}
\begin{abstract}
We formally investigate some computational obstacles to tractability of computing the variety determined by $K$ complex polynomials in $N$ boolean variables. We show that using algebraic methods for solving combinatorial problems, the obstacles to tractability lies in the order of magnitude of asymmetries admitted by the given system of equations.
\end{abstract}

\maketitle

\section{Introduction}

Let $N$ and $K$ be natural numbers which remain fixed throughout the paper. Recall that $\C[x_1,\ldots,x_N]$ denotes the ring of complex polynomials in the $N$ variables $x_1, \ldots,x_N$, and let $\cI$ be the ideal generated by $\setof{x_n^2-x_n}_1^N$. Let $\cC = \nicefrac{\C[x_1,\ldots,x_n]}{\cI}$ be the ring of complex polynomials in $N$ Boolean variables. We investigate the tractability of computing the variety $Z(\cF)$ corresponding to the simultaneous zeros of $K$ given polynomials $\cF = \setof{f_k}_1^K \subset \cC$, each of which have at most $n$ terms. Recall that $\C[x_1,\ldots,x_N]$ -- and hence, $\cC$ -- admits a natural action of the symmetric group $\Sigma_N$ obtained by permuting the variables $\setof{x_n}_1^N$. More precisely, for each $\sigma \in \Sigma_N$ and $f \in \C[x_1,\ldots,x_N]$ we define $\sigma \circ f \in \cC$ by 
\[
\sigma\circ f(x_1, \ldots, x_n) = f(x_{\sigma(1)},\ldots,x_{\sigma(N)}).
\] 
Recall the polynomials fixed by this action of $\Sigma_N$ are called the {\em symmetric polynomials} of $\C[x_1,\ldots,x_N]$.

For each $\sigma \in \Sigma_N$, define the {\em $\sigma$-permuted system} $\cF_\sigma \subset \cC$ by $\cF_\sigma = \setof{\sigma \circ f_k}_1^N$. The {\em stabilizer} of the system $\cF$ is the subgroup of $\Sigma_N$ defined as follows
\[
\Aut(\cF) = \setof{\sigma \in \Sigma_N \mid \cF_\sigma = \cF}
\]

\section{The Polynomial method for solving Combinatorial problems with bounded size destabilizers}

\begin{thm}
If $\left|\Sigma_N \smallsetminus \Aut(\cF)\right|= c \le c_0 $ for some constant $c_0$ then the problem of determining $Z(\cF)$ is in co-NP. 
\end{thm}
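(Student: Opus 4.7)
The plan is to exploit the destabilizer bound in two stages: first, elementary subgroup theory will force $\Aut(\cF) = \Sigma_N$ for $N$ sufficiently large relative to $c_0$, and second, when $\cF$ is fully symmetric, $Z(\cF)$ will be governed by a piece of Hamming-weight data of size only $N+1$, placing the decision problem in P and therefore in co-NP.

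By Lagrange's theorem any proper subgroup $H \leq \Sigma_N$ has index at least $2$, so $|\Sigma_N \smallsetminus H| \geq N!/2$. The hypothesis $|\Sigma_N \smallsetminus \Aut(\cF)| \leq c_0$ therefore forces either $\Aut(\cF) = \Sigma_N$ outright, or $N! \leq 2 c_0$, which restricts $N$ to a constant-size set depending only on $c_0$. The latter case admits a brute-force check over $\{0,1\}^N$ in constant time, so I would dispose of it immediately and focus on the symmetric regime.

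For large $N$ with $\Aut(\cF) = \Sigma_N$, the induced action of $\Sigma_N$ on the finite set $\cF$ yields a homomorphism $\Sigma_N \to \Sigma_K$. Since $K$ is fixed while $N$ grows and $A_N$ is simple for $N \geq 5$, the kernel of this homomorphism contains $A_N$, so each $f_k$ is $A_N$-invariant modulo $\cI$. The Vandermonde $\prod_{i<j}(x_i - x_j)$ vanishes on every boolean input as soon as $N \geq 3$ and hence represents $0$ in $\cC$; consequently the $A_N$- and $\Sigma_N$-invariants of $\cC$ coincide, so each $f_k$ is genuinely symmetric in $\cC$. Its evaluation on $x \in \{0,1\}^N$ therefore depends only on the Hamming weight $|x| = \sum_n x_n$, and $Z(\cF)$ is a union of Hamming slices indexed by a subset $W \subseteq \{0, 1, \ldots, N\}$. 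One recovers $W$ in polynomial time by evaluating each $f_k$ at the canonical weight-$w$ vector for $w = 0, \ldots, N$ and collecting those $w$ where all $K$ evaluations vanish.

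The principal obstacle I anticipate is not the complexity packaging but the algebraic passage from set-stability of $\cF$ to individual symmetry of each $f_k$: one must combine the simplicity of $A_N$ with the collapse of the Vandermonde in $\cC$, and check that both reductions are valid precisely in the regime forced by the destabilizer bound. A secondary subtlety worth flagging is the interpretation of \emph{determining $Z(\cF)$}: if this refers merely to emptiness, then co-NP containment is already immediate for arbitrary $\cF$ by the standard NP guess-and-check for a boolean witness, so the theorem's genuine content lies in the stronger structural description of $Z(\cF)$ afforded by symmetry.
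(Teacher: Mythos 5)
Your argument is correct and reaches the stated conclusion by a genuinely different route from the paper's. You start from the Lagrange-index observation that a proper subgroup of $\Sigma_N$ has complement of size at least $N!/2$, so the hypothesis $\left|\Sigma_N \smallsetminus \Aut(\cF)\right| \le c_0$ forces either $N! \le 2c_0$ (where brute force over $\{0,1\}^N$ runs in time polynomial in the encoding of $\cF$, since $2^N$ is then a constant) or $\Aut(\cF) = \Sigma_N$; in the latter regime you pass through the homomorphism $\Sigma_N \to \mathrm{Sym}(\cF)$, the simplicity of $A_N$, and the collapse of alternating functions on $\{0,1\}^N$ for $N \ge 3$ to conclude that each $f_k$ is genuinely symmetric in $\cC$, whence $Z(\cF)$ is a union of Hamming slices recoverable from $N+1$ evaluations. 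The paper instead forms the products $f_{k,\left\{\sigma_0,\ldots,\sigma_{c-1}\right\}}$ of the $\sigma_i \circ f_k$ over the destabilizing permutations, eliminates down to a univariate polynomial $p(x_0)$ in the ideal they generate, and inspects its roots --- a criterion the paper itself concedes is only necessary, supplemented by a matrix condition $\mathbf{M}_{\cF}$ whose co-NP verifiability is not established. Your route buys considerably more: membership in P (hence in co-NP), a complete polynomial-size description of $Z(\cF)$, and an explanation of why the hypothesis is degenerate --- the ``small destabilizer set'' the paper manipulates is either empty or lives inside a symmetric group of bounded order, so the iterated products never genuinely come into play. You are also right to flag that if ``determining $Z(\cF)$'' merely means deciding emptiness, then co-NP membership is immediate for arbitrary $\cF$ via the standard guess-and-check certificate for nonemptiness. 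Two small points to tighten: the brute-force case is polynomial time rather than constant time (the input still carries the coefficients of $\cF$), and the step from $A_N$-invariance to full symmetry is cleanest via the decomposition of an $A_N$-invariant element of $\cC$ into its trivial and sign isotypic parts, the latter vanishing because every point of $\{0,1\}^N$ with $N \ge 3$ is fixed by some transposition; this is the function-theoretic form of your Vandermonde collapse and avoids any divisibility argument in the quotient ring.
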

\begin{proof}
We wish to determine whether the following algebraic variety is non-empty.
\begin{align*}
Z(\cF) = \setof{(x_1,\ldots,x_N) \in \setof{0,1}^N \mid f_k(x_1,\ldots,x_N) = 0 \text{ for each }1 \leq k \leq K}
\end{align*}
Consider the following iteration
\begin{equation}
\begin{array}{c}
f_{k,\left\{ \sigma_{1},\,\sigma_{2}\right\} }=\left(\sigma_{1}\circ f_k \right)\cdot \left(\sigma_{2}\circ f_k\right)\mod \cI\\
\vdots\\
f_{k,\left\{ \sigma_{0},\cdots,\,\sigma_{c-1} \right\} }=f_{k,\left\{ \sigma_{0},\cdots,\sigma_{c-2}\right\} }\cdot\left( \sigma_{c-1}\circ f_k \right)\mod \cI
\end{array}
\end{equation}
where 
\begin{equation*}
 \Sigma_N \smallsetminus \Aut(\cF) := \left\{ \sigma_{0},\cdots,\,\sigma_{c-1} \right\}.
\end{equation*}
So as to induce the following set of polynomials each having at most $n^c$ terms, 
all square free. 
\begin{equation}
\cG:=\left\{ f_{k,\left\{ \sigma_{0},\cdots,\,\sigma_{c-1} \right\} } \right\} _{0\le k<N}.
\end{equation}
We can determine a polynomial $p\left(x_{0}\right)$ in a single variable
\begin{equation}
p\left(x_{0}\right)=\prod_{0\le t<N}\left(x_{0}-\beta_{t}\right)
\end{equation}
in the ideal generated by the polynomials in $\cG$ i.e. $\exists\;\left\{h_k \right\} _{0\le k<n}\subset\cC$ such that
\begin{equation}
p\left(x_{0}\right)=\sum_{0\le k<K}h_k\cdot f_{k,\left\{ \sigma_{0},\cdots,\,\sigma_{c -1} \right\} }.
\end{equation}
Henceforth, let $\boldsymbol{\beta}$ denote the vector whose entries are the roots of the polynomial $p\left(x_{0}\right)$. If $\boldsymbol{\beta}$ admits an index $t$ for which $\beta_{t}\notin\mathbb{F}_{2}$
than this fact constitutes a certificate of non existence of solution
to $\cF$. However it would be incorrect to conclude that if $\forall\:0\le t<N$,
$\beta_{t}\in\mathbb{F}_{2}$ there should necessarily exist solutions
to $\cF$. The criteria invoked above is therfore a necessary but not a sufficient condition for
the existence of solution to $\cF$. The sufficient condition for the
existence of solution to $\cF$ is the fact that the matrix $\mathbf{M}_{\cF}$
of size $N\times c$, whose entries are given by 
\begin{equation}
\mathbf{M}_{\cF}:=\left(m_{k,\sigma}=\sigma\circ f_{k}\right)
\end{equation}
(where $0 \le k<K$ and $\sigma \in \Sigma_N \smallsetminus \Aut(\cF)$), has the property 
that $\mathbf{M}_{\cF}\mod\left(\mathbf{x}-\boldsymbol{\beta}\right)$, has at least one zero column.
\end{proof}

\end{document}